\numberwithin{equation}{section}
\numberwithin{figure}{section}
\theoremstyle{plain}
\newtheorem{thm}{\protect\theoremname}[section]
\theoremstyle{definition}
\newtheorem{defn}[thm]{\protect\definitionname}
\theoremstyle{definition}
\newtheorem{question}{Question}[section]
\newtheorem{rem}[thm]{\protect\remarkname}
\theoremstyle{plain}
\theoremstyle{plain}
\newtheorem{lem}[thm]{\protect\lemmaname}
\theoremstyle{plain}
\newtheorem{prop}[thm]{\protect\propositionname}
\theoremstyle{definition}
\newtheorem{example}[thm]{\protect\examplename}
\newcommand{\labeltext}[2]{
\@bsphack
\csname phantomsection\endcsname
\def\@currentlabel{#1}{\label{#2}}
\@esphack
}
\def\dashint{\,\ThisStyle{\ensurestackMath{%
  \stackinset{c}{.2\LMpt}{c}{.5\LMpt}{\SavedStyle-}{\SavedStyle\phantom{\int}}}%
  \setbox0=\hbox{$\SavedStyle\int\,$}\kern-\wd0}\int}
\DeclareRobustCommand{\SkipTocEntry}[5]{}
\newcommand{\mR}{\mathbb{R}}   
\newcommand{\mC}{\mathbb{C}}   
\newcommand{\abs}[1]{\lvert #1 \rvert}  
\newcommand{\norm}[1]{\lVert #1 \rVert}  
\newcommand{\ol}[1]{\overline{#1}}
\newcommand{\eps}{\epsilon}
\providecommand{\corollaryname}{Corollary}
\providecommand{\definitionname}{Definition}
\providecommand{\lemmaname}{Lemma}
\providecommand{\propositionname}{Proposition}
\providecommand{\remarkname}{Remark}
\providecommand{\theoremname}{Theorem}
\providecommand{\examplename}{Example}
\begin{document}


\title[On positivity sets for Helmholtz solutions]{On positivity sets for Helmholtz solutions}

\dedicatory{Dedicated to Carlos E.\ Kenig on the occasion of his 70$^{\text{th}}$ birthday}

\begin{sloppypar}

\begin{abstract}
We address the question of finding global solutions of the Helmholtz equation that are positive in a given set. This question arises in inverse scattering for penetrable obstacles. In particular, we show that there are solutions that are positive on the boundary of a bounded Lipschitz domain.
\end{abstract}

\subjclass[2020]{35J05; 35J15; 35J20; 35R30; 35R35. }
\keywords{Helmholtz equation;
acoustic equation; Lipschitz domain; inverse scattering problem}
\author{Pu-Zhao Kow}
\address{Department of Mathematics and Statistics, P.O. Box 35 (MaD), FI-40014
University of Jyv\"{a}skyl\"{a}, Finland.}
\email{\href{mailto:pu-zhao.pz.kow@jyu.fi}{pu-zhao.pz.kow@jyu.fi}}
\author{Mikko Salo}
\address{Department of Mathematics and Statistics, P.O. Box 35 (MaD), FI-40014
University of Jyv\"{a}skyl\"{a}, Finland.}
\email{\href{mailto:mikko.j.salo@jyu.fi}{mikko.j.salo@jyu.fi}}
\author{Henrik Shahgholian}
\address{Department of Mathematics, KTH Royal Institute of Technology, SE-10044 Stockholm, Sweden. }
\email{\href{mailto:henriksh@kth.se}{henriksh@kth.se}}

\maketitle


\section{Introduction}




The objective in this short note is to consider the following problem.

\begin{question} \label{q_main}
Let $k > 0$ and let $E$ be a subset of $\mR^n$ {\rm (}$n \ge 2${\rm )}. Does there exist a solution of $(\Delta+k^2) u = 0$ in $\mR^n$ with $u|_E > 0$?
\end{question}

Note that any solution of the Helmholtz equation $(\Delta+k^2)u = 0$ is $C^{\infty}$, and thus the condition $u|_E > 0$ can be understood pointwise. There is a substantial literature on zero sets of solutions of elliptic equations and eigenfunctions, as discussed in the review \cite{LogunovMalinnikova2019}. In our setting, any real valued solution of $(\Delta+k^2)u = 0$ in $\mR^n$ must have a zero in any closed ball of radius $j_{\frac{n-2}{2},1} k^{-1}$ where $j_{\frac{n-2}{2},1}$ is the first zero of the Bessel function $J_{\frac{n-2}{2}}$ (see e.g.\  \cite[Lemma 3.1]{SS21NonscatteringFreeBoundary}). Question \ref{q_main} above is related to producing a global solution whose zero set avoids a given set $E$.

Our motivation comes from inverse scattering theory and the works \cite{CakoniVogelius, SS21NonscatteringFreeBoundary, KLSS22QuadratureDomain}. In these works, one considers a bounded open set $D \subset \mR^n$ (penetrable obstacle) together with a coefficient  $h \in L^{\infty}(\mR^n)$ with $\abs{h} \geq c > 0$ a.e.\ near $\partial D$ (contrast), and asks whether it is possible to find a solution $u_0 \not\equiv 0$ of $(\Delta+k^2)u_0=0$ in $\mR^n$ (incident wave) such that the obstacle $D$ with contrast $h$ does not produce any scattering response. The last condition can be precisely formulated as the existence of a function $u$ solving 
\begin{gather*}
    (\Delta + k^2 + h \chi_D) u = 0 \text{ in $\mR^n$}, \\
    u = u_0 \text{ outside some ball}.
\end{gather*}
If this happens for some contrast $h$, then the obstacle $D$ is called a \emph{non-scattering domain} and it will be invisible with respect to probing with the incident wave $u_0$.

It was proved in \cite[Theorem 2.1]{SS21NonscatteringFreeBoundary} that if $D$ has real-analytic boundary and if there is an incident wave $u_0$ with $u_0|_{\partial D} > 0$, then $D$ is a non-scattering domain. Similarly, the work \cite{KLSS22QuadratureDomain} introduced the notion of quadrature domains for the Helmholtz operator $\Delta+k^2$ and proved that if $D$ is such a domain, and if there is an incident wave $u_0$ with $u_0|_{\partial D} > 0$, then $D$ is a non-scattering domain. On the other hand, the works \cite{CakoniVogelius, SS21NonscatteringFreeBoundary} show that under a nonvanishing condition for $u_0$ on $\partial D$, the boundary of a non-scattering domain can be interpreted as a free boundary in an obstacle-type problem and hence such a domain must be either regular or have thin complement near any boundary point.

It was also proved in \cite{SS21NonscatteringFreeBoundary} that one may be able to find incident waves that are positive on the boundary of a bounded $C^1$ domain (Lipschitz if $n=2,3$). Our first main result extends this to Lipschitz domains in any dimension.

\begin{thm}
\label{thm:main-incident}Let $D\subset\mathbb{R}^{n}$ {\rm (}$n \ge 2${\rm )} be a bounded Lipschitz domain such that $\mathbb{R}^{n}\setminus\overline{D}$ is connected. Suppose that $k^{2}>0$ is not a Dirichlet eigenvalue of $-\Delta$ in $D$. Then there exists a Herglotz wave function $u_{0}$ {\rm (}see Definition~{\rm \ref{def:Herglotz-wave}}{\rm )} satisfying  
\[
(\Delta+k^{2})u_{0}=0\text{ in }\mathbb{R}^{n}\text{ and }u_{0}|_{\partial D}>0.
\]
\end{thm}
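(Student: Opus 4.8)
The plan is to argue by duality rather than by an explicit construction. Work in the space $C(\partial D)$ of real continuous functions on the compact set $\partial D$, let $P=\{f\in C(\partial D):f>0\ \text{on}\ \partial D\}$ be its positive cone (open, convex, nonempty), and let $V\subset C(\partial D)$ be the real-linear subspace of restrictions to $\partial D$ of real-valued Herglotz wave functions. Since $\re u_g$ and $\im u_g$ are themselves real-valued Herglotz wave functions for every $g\in L^2(S^{n-1})$, where $u_g(x)=\int_{S^{n-1}}e^{ikx\cdot\theta}g(\theta)\,d\theta$, a positive measure $\mu$ annihilates $V$ iff $\int_{\partial D}u_g\,d\mu=0$ for all $g$. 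It therefore suffices to prove $V\cap P\ne\emptyset$. Suppose not; since $V$ is a subspace disjoint from the open convex set $P$, the Hahn--Banach separation theorem produces a nonzero $\mu\in C(\partial D)^\ast=\mM(\partial D)$ that annihilates $V$ and is nonnegative on nonnegative functions, i.e.\ a nonzero positive Radon measure $\mu$ on $\partial D$ with $\int_{\partial D}u_g\,d\mu=0$ for every $g$. By Fubini this is equivalent to $\int_{\partial D}e^{ikx\cdot\theta}\,d\mu(x)=0$ for all $\theta\in S^{n-1}$, that is, the Fourier transform $\widehat\mu$ vanishes identically on the sphere $\{\,|\xi|=k\,\}$.

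Next I would turn this constraint into a statement about a global Helmholtz solution. Let $\Phi_k$ denote the outgoing fundamental solution of $\Delta+k^2$ and set $v(x)=\int_{\partial D}\Phi_k(x-y)\,d\mu(y)$. Then $v$ is locally integrable, smooth away from $\partial D$, solves $(\Delta+k^2)v=\mu$ in $\mR^n$, and is a radiating solution in $\mR^n\setminus\overline D$; its far-field pattern is a constant multiple of the restriction of $\widehat\mu$ to $\{\,|\xi|=k\,\}$, hence vanishes identically. By Rellich's lemma, $v\equiv0$ outside a large ball, and since $\mR^n\setminus\overline D$ is connected and Helmholtz solutions are real-analytic, it follows that $v\equiv0$ on all of $\mR^n\setminus\overline D$.

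To finish, observe that $v|_D$ solves $(\Delta+k^2)v=0$ in $D$ (because $\supp\mu\subset\partial D$), and that the vanishing of $v$ on the exterior domain, combined with the continuity properties of the single-layer potential of a finite measure across a Lipschitz hypersurface, forces the interior Dirichlet trace of $v$ on $\partial D$ to be zero. Because $k^2$ is not a Dirichlet eigenvalue of $-\Delta$ in $D$, this gives $v\equiv0$ in $D$ as well, hence $\mu=(\Delta+k^2)v\equiv0$, contradicting $\mu\ne0$. Therefore $V\cap P\ne\emptyset$: some real-valued Herglotz wave function $u_0$ satisfies $(\Delta+k^2)u_0=0$ in $\mR^n$ and $u_0|_{\partial D}>0$, as desired.

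The step I expect to be the main obstacle is the last one: making precise, for a merely bounded Lipschitz domain, that ``$v=0$ in the exterior'' implies ``$v$ has vanishing interior Dirichlet trace'', and that $v|_D$ belongs to a class of solutions for which ``$k^2$ not a Dirichlet eigenvalue'' genuinely forces uniqueness. This is exactly where the Lipschitz regularity of $\partial D$ enters, via the standard Lipschitz-domain potential theory for the Helmholtz single-layer operator (continuity up to the boundary, jump relations, and solvability/uniqueness for the interior and the radiating exterior Dirichlet problems). The remaining ingredients — Rellich's lemma, unique continuation across the connected set $\mR^n\setminus\overline D$, the far-field computation, and the Hahn--Banach reduction — are elementary and insensitive to the smoothness of $\partial D$, with the two hypotheses ``$\mR^n\setminus\overline D$ connected'' and ``$k^2$ not a Dirichlet eigenvalue of $-\Delta$ in $D$'' entering in the unique-continuation step and in the interior-uniqueness step, respectively.
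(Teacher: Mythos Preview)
Your duality strategy mirrors the paper's own Runge--Herglotz argument (Proposition~\ref{prop:Herglotz}): both take a functional annihilating Herglotz waves, form $w=\Phi_k*\mu$, use the far-field computation, Rellich, and unique continuation to force $w\equiv 0$ outside $\overline D$, and then appeal to interior uniqueness. The substantive difference is \emph{where} the separation is performed. The paper first solves the Dirichlet problem $(\Delta+k^2)v=0$ in $D$, $v|_{\partial D}=1$, bootstraps $v$ into $H^{s,p}(D)$ with $s>n/p$ (this is exactly where the Jerison--Kenig Lipschitz theory enters), and only then runs Hahn--Banach in $H^{s,p}(D)$. The annihilator $\mu$ thus lies in $H^{-s,p'}$, so $w=\Phi_k*\mu\in H^{2-s,p'}_{\mathrm{loc}}$; once $w=0$ outside $\overline D$ one has $w\in H^{2-s,p'}_{\overline D}(\mR^n)$, and density of $C_c^\infty(D)$ there completes the argument cleanly.

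You instead separate in $C(\partial D)$, so $\mu$ is merely a finite positive Radon measure on $\partial D$, and this is the gap you anticipate but underestimate. Nothing prevents $\mu$ from having atoms: if $\mu=\delta_{x_0}$ with $x_0\in\partial D$, then $v(x)=\Phi_k(x-x_0)$ blows up like $|x-x_0|^{2-n}$ (or logarithmically when $n=2$) and is not even in $L^2_{\mathrm{loc}}$ for $n\ge 4$, let alone continuous across $\partial D$ or in a uniqueness class for the interior Dirichlet problem. The ``standard Lipschitz-domain potential theory'' you invoke (Verchota, Dahlberg--Kenig, Mitrea, \dots) is developed for $L^p(\partial D,\sigma)$ densities, not arbitrary measures, so it does not supply the missing step. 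To repair the argument one would want to separate in a space whose dual forces Sobolev regularity on $\mu$ --- but in the natural candidates (e.g.\ $H^{1/2}(\partial D)$) the positive cone has empty interior, so the open-cone separation is lost. The paper resolves this tension by transporting the problem from $\partial D$ to $D$ via the auxiliary Dirichlet solution $v$, which is precisely the idea your outline is missing.
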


The proof of Theorem \ref{thm:main-incident} is done in two steps. One first constructs a solution $v$ of $(\Delta+k^2) v = 0$ in $D$ with $v|_{\partial D} > 0$ by solving a Dirichlet problem. Then one approximates $v$ in $D$ by a suitable  Herglotz wave $u_0$ in $\mR^n$ via a Runge approximation argument. This approximation needs to be done in a suitable norm to obtain the pointwise condition $u_0|_{\partial D} > 0$, but since $D$ only has Lipschitz boundary the solution $v$ is not very regular and this limits the choice of possible norms. We will work with fractional Sobolev spaces $H^{s,p}$ and invoke the theory of boundary value problems in Lipschitz domains.\footnote{This is one of the areas where Carlos Kenig has made pioneering contributions.} 

We remark that the assumption in Theorem \ref{thm:main-incident} that $k^2$ is not an eigenvalue is necessary, at least when $D$ is a ball (see Example \ref{ex_dirichlet_eigenvalue}). For the first eigenvalue this was pointed out in \cite[Remark 3.2]{SS21NonscatteringFreeBoundary}.

Another instance of subsets $E \subset \mR^n$ where one can arrange $u_0|_E > 0$ is given in the following result.

\begin{thm}
\label{thm:main-incident-two} Let $k > 0$, and let $D\subset\mathbb{R}^{n}$ {\rm (}$n \ge 2${\rm )} be a bounded Lipschitz domain such that $\mathbb{R}^{n}\setminus\overline{D}$ is connected and $\abs{D} \leq \abs{B_r}$ where $r = j_{\frac{n-2}{2},1} k^{-1}$. If $E \subset D$ is compact, then there exists a Herglotz wave function $u_{0}$ {\rm (}see Definition~{\rm \ref{def:Herglotz-wave}}{\rm )} satisfying  
\begin{equation}
(\Delta+k^{2})u_{0}=0\text{ in }\mathbb{R}^{n}\text{ and }u_{0}|_{E}>0. \label{eq:positive-solution-on-E}
\end{equation}
\end{thm}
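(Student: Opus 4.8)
\emph{Plan.} The proof follows the two-step scheme of Theorem~\ref{thm:main-incident} --- build a positive local solution, then approximate it by a Herglotz wave --- but the hypothesis that $k^{2}$ is not a Dirichlet eigenvalue is now replaced by the volume bound $\abs{D}\le\abs{B_{r}}$, which I would exploit through the Faber--Krahn inequality. Since the first Dirichlet eigenvalue of the ball $B_{r}$ with $r=j_{\frac{n-2}{2},1}k^{-1}$ equals $k^{2}$ (its ground state being precisely the radial Bessel solution), Faber--Krahn gives $\lambda_{1}(D)\ge\lambda_{1}(B_{r})=k^{2}$; moreover $\lambda_{1}(D)=k^{2}$ forces $\abs{D}=\abs{B_{r}}$ and, by the equality case of Faber--Krahn, $D$ to be a translate of $B_{r}$. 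This yields a dichotomy, and I would treat the two cases separately.

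\emph{Case $\lambda_{1}(D)>k^{2}$.} Then $k^{2}$ lies strictly below the Dirichlet spectrum, so the problem $(\Delta+k^{2})v=0$ in $D$, $v=1$ on $\partial D$, has a unique solution $v\in H^{1}(D)\cap C^{\infty}(D)$; testing the equation against $v^{-}\in H^{1}_{0}(D)$ and using the Rayleigh bound $\norm{\nabla v^{-}}_{L^{2}(D)}^{2}\ge\lambda_{1}(D)\norm{v^{-}}_{L^{2}(D)}^{2}$ shows $v\ge0$, and then the strong minimum principle for the superharmonic function $v$ (note $\Delta v=-k^{2}v\le0$) gives $v>0$ in $D$, so $m:=\min_{E}v>0$. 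Because $\mR^{n}\setminus\overline{D}$ is connected and $D$ is Lipschitz, $\mR^{n}\setminus D$ has no bounded component, hence $D$ is Runge in $\mR^{n}$ and global solutions of $(\Delta+k^{2})w=0$ approximate $v$ in $C^{\infty}_{\mathrm{loc}}(D)$, in particular uniformly on the compact set $E$. As in the proof of Theorem~\ref{thm:main-incident} this Runge approximation may be carried out with Herglotz wave functions, and passing to the real part (which is again a Herglotz wave, $v$ being real) I obtain a real Herglotz wave $u_{0}$ with $\sup_{E}\abs{u_{0}-v}<m$, whence $u_{0}|_{E}>0$ and \eqref{eq:positive-solution-on-E} holds.

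\emph{Case $\lambda_{1}(D)=k^{2}$.} Then $D=B_{r}(x_{0})$ for some $x_{0}\in\mR^{n}$. I would simply exhibit the solution: the radial function $u_{0}(x):=\int_{S^{n-1}}e^{ik(x-x_{0})\cdot\theta}\,d\sigma(\theta)$ is a Herglotz wave function (its density $\theta\mapsto e^{-ikx_{0}\cdot\theta}$ lies in $L^{2}(S^{n-1})$) solving $(\Delta+k^{2})u_{0}=0$ in $\mR^{n}$, and up to a positive constant it equals $(k\abs{x-x_{0}})^{-\frac{n-2}{2}}J_{\frac{n-2}{2}}(k\abs{x-x_{0}})$, extended by its positive limit at $x_{0}$. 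Since $j_{\frac{n-2}{2},1}=kr$ is the first positive zero of $J_{\frac{n-2}{2}}$, $u_{0}$ is real and strictly positive on the open ball $B_{r}(x_{0})\supset E$, so $u_{0}$ satisfies \eqref{eq:positive-solution-on-E} and the proof is complete.

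\emph{Main obstacle.} The one genuinely new idea beyond Theorem~\ref{thm:main-incident} is recognizing that the volume threshold $\abs{D}\le\abs{B_{r}}$ is exactly the Faber--Krahn threshold for the eigenvalue $k^{2}$, and that the borderline case --- the only case in which the Dirichlet construction of $v$ fails --- is resolved explicitly by the ball's ground state, which happens to be a Herglotz wave. The remaining analytic ingredients (unique solvability and positivity of the Dirichlet problem on a bounded Lipschitz domain, the Runge theorem delivering $C^{0}(E)$ approximation, the density of Herglotz waves, and the reduction to real densities) are exactly as in the proof of Theorem~\ref{thm:main-incident}; one should also confirm that the equality statement in Faber--Krahn applies to bounded Lipschitz domains, which is standard. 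Alternatively, one can avoid the case split by first shrinking $D$ to a smaller domain $D'\Subset D$ with $E\Subset D'$, $\mR^{n}\setminus\overline{D'}$ connected and $\abs{D'}<\abs{B_{r}}$, so that $\lambda_{1}(D')>k^{2}$ strictly; then only Case $\lambda_{1}>k^{2}$ is needed, at the cost of a (routine) topological construction of $D'$.
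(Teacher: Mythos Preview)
Your proposal is correct and shares the paper's two-step scheme (Faber--Krahn to produce a positive local solution, then Runge--Herglotz approximation), with two variations worth recording. In the borderline case $\lambda_{1}=k^{2}$ the paper simply takes $v$ to be a positive first Dirichlet eigenfunction on the relevant component and then approximates it, making no appeal to the rigidity statement in Faber--Krahn; you instead invoke rigidity to pin $D$ down as a ball and exhibit the translated Bessel ground state as an explicit Herglotz wave, which is elegant and bypasses the approximation step entirely in that case, at the price of needing the equality case of Faber--Krahn for Lipschitz domains. For the approximation step, the paper passes to an intermediate Lipschitz domain $D_{1}$ with $E\subset D_{1}\Subset D$ and $\mR^{n}\setminus\overline{D_{1}}$ connected, so that $v|_{D_{1}}\in H^{1,p}(D_{1})$ for any $p>n$ by interior regularity, and then applies Proposition~\ref{prop:Herglotz} on $D_{1}$ together with Sobolev embedding; you instead run the approximation on $D$ itself and upgrade to uniform convergence on $E$ via interior elliptic estimates, which is equivalent in content. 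Your suggested alternative --- shrink to $D'\Subset D$ with $\abs{D'}<\abs{B_{r}}$ so that only the strict case $\lambda_{1}(D')>k^{2}$ occurs --- is a clean way to sidestep the dichotomy and is not the route the paper takes.
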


The proof is similar to that of Theorem \ref{thm:main-incident}, except that in the first step we use the Faber-Krahn inequality to produce a solution $v$ that is positive near $E$.

\begin{rem}
If $E$ is sufficiently nice and low dimensional, it may be possible to use Theorem \ref{thm:main-incident-two} to find solutions that are positive on $E$. For example, let $E$ be a smooth compact manifold with $\dim\,(E) = m \le n-2$ embedded in $\mathbb{R}^{n}$, which is homeomorphic to a compact submanifold $E_{1}$ of $\mathbb{R}^{n-1} \cong \mathbb{R}^{n-1} \times \{0\} \subset \mathbb{R}^{n}$. This holds e.g.\ when $m < n/2$ by the Whitney embedding theorem, or when $E$ is homeomorphic to $S^m$. Since $\mathbb{R}^{n}\setminus E_{1}$ is connected, by \cite[Corollary~7.9]{MadsenTornehave} one sees that $\mathbb{R}^{n}\setminus E$ is (pathwise) connected. One can construct a tubular neighborhood $D = \{ x \in \mR^n \,:\, d(x,E) < \eps \}$ of $E$ having smooth boundary $\partial D$ and arbitrarily small measure \cite[Theorem~9.23 and Remark~9.24]{MadsenTornehave} (see also \cite[Theorem~6.24]{John13IntroductionSmoothManifold}). Since $\mR^n \setminus E$ is connected, one can connect any two points in $\mR^n \setminus D$ by a curve $\gamma$ in $\mR^n \setminus E$. By considering the curve $F(\gamma)$ where $F$ is a continuous map on $\mR^n$ that fixes $\mR^n \setminus D$ and collapses $D \setminus E$ to $\partial D$, we see that $\mR^n \setminus D$ is connected. Since $D$ has smooth boundary, also $\mR^n \setminus \overline{D}$ is connected. (See \cite[pages~61--62]{CF77KnotTheory} for a related discussion.) Thus we may apply Theorem~{\rm \ref{thm:main-incident-two}} to find a Herglotz wave function $u_{0}$ satisfying \eqref{eq:positive-solution-on-E}. Note that the connectedness of $\mR^n \setminus E$ can fail when $E$ has dimension $n-1$. 
\end{rem}


\section{\label{sec:incident-field}Solutions satisfying the positivity condition} 

In this section we will prove Theorems~\ref{thm:main-incident} and \ref{thm:main-incident-two}. We begin with some preparations.

\addtocontents{toc}{\SkipTocEntry}
\subsection{Fractional Sobolev spaces}

For each $s\in\mathbb{R}$ and $1<p<\infty$, the fractional Sobolev space $H^{s,p}(\mathbb{R}^{n})$
is the Banach space equipped with the norm 
\[
\|u\|_{H^{s,p}(\mathbb{R}^{n})}:=\|\langle D\rangle^{s}u\|_{L^{p}(\mathbb{R}^{n})},
\]
where $\langle D\rangle^{s}$ is the the \emph{Bessel potential} of order $s$, i.e.\ the Fourier multiplier corresponding
to $\langle\xi\rangle^{s}=(1+|\xi|^{2})^{\frac{s}{2}}$. 
In particular when $s=k\ge1$ is an integer, we also have $H^{k,p}(\mathbb{R}^{n})=W^{k,p}(\mathbb{R}^{n})$,
where 
\[
W^{k,p}(\mathbb{R}^{n})=\begin{Bmatrix}\begin{array}{l|l}
u\in L^{p}(\mathbb{R}^{n}) & D^{\alpha}u\in L^{p}(\mathbb{R}^{n})\text{ for all multi-indices }\alpha\text{ with }|\alpha| \leq k\end{array}\end{Bmatrix}.
\]
From \cite[Corollary~6.2.8]{BL76InterpolationSpaces}, we have the duality statement 
\begin{equation}
(H^{s,p}(\mathbb{R}^{n}))^{*}=H^{-s,p'}(\mathbb{R}^{n})\quad\text{for all }s\in\mathbb{R}\text{ and }1<p<\infty,\label{eq:duality}
\end{equation}
where $(p')^{-1}+p^{-1}=1$. We also recall the Sobolev embedding (\cite[Theorem~6.5.1]{BL76InterpolationSpaces}): 
\begin{equation} \label{eq_fse}
    H^{s,p}(\mathbb{R}^{n})\subset H^{s_{1},p_{1}}(\mathbb{R}^{n})
\end{equation}
whenever $1<p\le p_{1}<\infty$, $-\infty<s_{1}\le s<\infty$, and  $s-\frac{n}{p}=s_{1}-\frac{n}{p_{1}}$. 

Let $D$ be an open set in $\mathbb{R}^{n}$. We define 
\[
H^{s,p}(D):=\begin{Bmatrix}\begin{array}{l|l}
u|_{D} & u\in H^{s,p}(\mathbb{R}^{n})\end{array}\end{Bmatrix}\quad  \text{for all  }s\in\mathbb{R}\text{ and }1<p<\infty.
\] 
This is a Banach space equipped with the quotient norm 
\[
\|v\|_{H^{s,p}(D)}:=\inf\begin{Bmatrix}\begin{array}{l|l}
\|u\|_{H^{s,p}(\mathbb{R}^{n})} & u|_{D}=v\end{array}\end{Bmatrix}.
\]
When $D$ is a bounded Lipschitz domain, from \cite[Theorem~2.3]{JK95Dirichlet}
we know that there exists a bounded linear extension operator 
\begin{equation}
E:H^{s,p}(D)\rightarrow H^{s,p}(\mathbb{R}^{n})\quad\text{with }Eu=u\text{ in }D\text{ for all }u\in H^{s,p}(D).
\label{eq:Hsp-extension}
\end{equation}
If $F \subset \mR^n$ is closed, we define 
\[
H_{F}^{s,p}(\mR^n):=\begin{Bmatrix}\begin{array}{l|l}
u\in H^{s,p}(\mathbb{R}^{n}) & \mathrm{supp}(u) \subset F \end{array}\end{Bmatrix}.
\]
If $D$ is a bounded Lipschitz domain, the following result can be found in \cite[Remark~2.7]{JK95Dirichlet}: 
\begin{equation} \label{eq_density_sobolev}
\text{$C_{c}^{\infty}(D)$ is dense in $H_{\ol{D}}^{s,p}(\mR^n)$
for each $s \in \mR$ and $1<p<\infty$.}
\end{equation}



\addtocontents{toc}{\SkipTocEntry}
\subsection{Runge-Herglotz approximation}

The next objective is to prove a result stating that solutions in $H^{s,p}(D)$ can be approximated in $D$ by Herglotz waves. We first give a definition.

\begin{defn}\label{def:Herglotz-wave}
Let $k>0$ and consider the operator $P_{k}:C^{\infty}(\mathcal{S}^{n-1})\rightarrow C^{\infty}(\mathbb{R}^{n})$ defined by  
\begin{equation*}
(P_{k}f)(x):=\int_{\mathcal{S}^{n-1}}e^{ikx\cdot\hat{z}}f(\hat{z})\,d\hat{z}, \qquad x\in\mathbb{R}^{n}.
\end{equation*}
The functions $u=P_{k}f$ with $f \in C^{\infty}(\mathcal{S}^{n-1})$ are called \emph{Herglotz waves}, and they are particular solutions of $(\Delta+k^{2})u=0$ in $\mathbb{R}^{n}$.
\end{defn}


\begin{prop}
\label{prop:Herglotz} Let $k>0$, $0<s\le1$, $1<p<\infty$, and
let $D\subset\mathbb{R}^{n}$ {\rm (}$n \ge 2${\rm )} be a bounded Lipschitz domain such that
$\mathbb{R}^{n}\setminus\overline{D}$ is connected. Given any $v\in H^{s,p}(D)$
with $(\Delta+k^{2})v=0$ in $D$, there exist Herglotz
waves $u_{j}\in C^{\infty}(\mathbb{R}^{n})$ such that 
\[
\|u_{j}-v\|_{H^{s,p}(D)}\rightarrow0\quad\text{as }j\rightarrow\infty.
\]
If $v$ is real-valued, then so are $u_{j}$. 
\end{prop}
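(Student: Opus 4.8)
The plan is a standard Runge-type argument via Hahn-Banach and duality, adapted to the $H^{s,p}$ scale. Fix $v \in H^{s,p}(D)$ with $(\Delta+k^2)v = 0$ in $D$. Consider the subspace $X := \{ (P_k f)|_D \,:\, f \in C^\infty(\mathcal{S}^{n-1}) \} \subset H^{s,p}(D)$. It suffices to show that $v$ lies in the $H^{s,p}(D)$-closure of $X$; then a diagonal sequence gives the $u_j$, and if $v$ is real-valued we may replace $u_j$ by $\re u_j$ (which is again a Herglotz wave, with density $\hat z \mapsto \frac12(f(\hat z) + \ol{f(-\hat z)})$, and the real part can only decrease the distance to the real-valued $v$). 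By Hahn-Banach it is enough to prove: every $\varphi \in (H^{s,p}(D))^*$ that annihilates $X$ also annihilates $v$.

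First I would identify the dual. Since $H^{s,p}(D)$ is a quotient of $H^{s,p}(\mathbb{R}^n)$, its dual is the annihilator of the kernel of the restriction map, i.e.\ $(H^{s,p}(D))^* \cong H^{-s,p'}_{\ol{D}}(\mathbb{R}^n)$ using the duality \eqref{eq:duality}; here a functional $\varphi$ is represented by a distribution $w \in H^{-s,p'}(\mathbb{R}^n)$ with $\supp(w) \subset \ol{D}$ acting by $\varphi(u) = \br{w, Eu}$ (well-defined independently of the extension because $w$ is supported in $\ol{D}$, using \eqref{eq_density_sobolev}). The condition $\varphi|_X = 0$ says $\br{w, P_k f} = 0$ for all $f \in C^\infty(\mathcal{S}^{n-1})$, equivalently (by Fubini) that the ``Fourier transform on the sphere'' $\int e^{ik x\cdot\hat z}\, dw(x)$, i.e.\ $\hat w$ restricted to the sphere $\{|\xi| = k\}$, vanishes.

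The heart of the argument is the unique continuation / Runge step. Let $\Phi$ be the outgoing fundamental solution of $\Delta + k^2$ and set $W := \Phi * w$, which solves $(\Delta+k^2) W = w$ on $\mathbb{R}^n$, hence $(\Delta+k^2)W = 0$ on the connected open set $\Omega := \mathbb{R}^n \setminus \ol{D}$, and $W$ satisfies the Sommerfeld radiation condition. The vanishing of $\hat w$ on $\{|\xi|=k\}$ forces the far-field pattern of $W$ to vanish, so by Rellich's lemma and unique continuation $W \equiv 0$ on the unbounded component of $\Omega$ — which, by the hypothesis that $\mathbb{R}^n \setminus \ol{D}$ is connected, is all of $\Omega$. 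Thus $W$ and $\nabla W$ vanish on $\partial D$ from outside. Now for $v$: extend $v$ by the extension operator \eqref{eq:Hsp-extension}, mollify to reduce to smooth solutions in a neighborhood of $\ol D$ (or argue directly by density \eqref{eq_density_sobolev} pairing $w$ with $Ev$), and apply Green's formula on $D$ to the pair $(v, W)$. The bulk terms cancel since both solve the Helmholtz equation in $D$, the boundary terms involve the Cauchy data of $W$ on $\partial D$ which vanish, and what remains is exactly $\br{w, v} = \varphi(v)$ up to the sign conventions; hence $\varphi(v) = 0$, as desired.

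The main obstacle is making the duality pairing and Green's identity rigorous at the low regularity $0 < s \le 1$, where $v$ need only be in $H^{s,p}(D)$ and $w$ in $H^{-s,p'}_{\ol D}(\mathbb{R}^n)$: neither the trace of $v$ nor its normal derivative on $\partial D$ need exist classically. I would handle this by the density \eqref{eq_density_sobolev} of $C_c^\infty(D)$ in $H^{-s,p'}_{\ol D}(\mathbb{R}^n)$, approximating $w$ by $w_m \in C_c^\infty(D)$; for these the pairing $\br{w_m, Ev}$ is unambiguous and the representation $W_m = \Phi * w_m$ is smooth, so Green's formula and the Rellich argument apply cleanly, and then one passes to the limit in $m$, using the boundedness of the extension operator and the continuity of convolution with $\Phi$ on the relevant spaces (or, alternatively, noting that $\br{w, Ev}$ is already independent of the choice of extension precisely because $\supp w \subset \ol D$ and invoking interior elliptic regularity to give $v$ a genuine Cauchy data pairing against the vanishing Cauchy data of $W$). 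The only other point requiring care is that $P_k$ maps into solutions and that its range is large enough to ``test'' all of $\hat w|_{\{|\xi|=k\}}$; this is immediate from the definition of $P_k$ and the Fourier inversion/Fubini computation above.
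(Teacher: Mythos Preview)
Your overall strategy---Hahn--Banach, the dual identification $(H^{s,p}(D))^* \cong H^{-s,p'}_{\ol D}(\mR^n)$, convolution with the outgoing fundamental solution, then Rellich plus unique continuation to force the potential to vanish in $\mR^n\setminus\ol D$---is exactly the paper's. Your Fubini shortcut for reading off $\hat w|_{\{|\xi|=k\}}=0$ directly from $\br{w,P_kf}=0$ is in fact slicker than the paper's route: there the same conclusion is obtained by a longer computation using the large-$|x|$ asymptotics of $P_kf$ and of $W=\Phi_k*w$ together with an integration by parts on balls $B_r$ as $r\to\infty$.

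The one place your outline does not go through is the final duality step. You propose to approximate the \emph{functional} $w\in H^{-s,p'}_{\ol D}(\mR^n)$ by $w_m\in C_c^\infty(D)$ and apply Green's formula on $D$ to the pair $(v,W_m)$ with $W_m=\Phi_k*w_m$. But Green's formula on $D$ needs the trace and the normal derivative of $v$ on $\p D$, and for $v\in H^{s,p}(D)$ with $0<s\le 1$ these are not available; interior elliptic regularity gives you nothing at the boundary. Also, $W_m$ has no reason to vanish outside $\ol D$ (Rellich does not apply to $W_m$, since $\hat w_m$ need not vanish on the sphere), so there is no clean limit for the boundary terms either. The paper's remedy is to approximate the \emph{potential} instead: once $W=0$ in $\mR^n\setminus\ol D$ and $W\in H^{2-s,p'}_{\mathrm{loc}}(\mR^n)$ by elliptic regularity, one has $W\in H^{2-s,p'}_{\ol D}(\mR^n)$, and \eqref{eq_density_sobolev} yields $W_j\in C_c^\infty(D)$ with $W_j\to W$ in $H^{2-s,p'}(\mR^n)$. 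Writing $w=-(\Delta+k^2)W$ and $\tilde v:=Ev$, one then gets
\[
\br{w,\tilde v}=-\lim_j\br{(\Delta+k^2)W_j,\tilde v}=-\lim_j\br{W_j,(\Delta+k^2)\tilde v}=0,
\]
the last equality because each $W_j$ is supported in $D$ where $(\Delta+k^2)\tilde v=0$. No boundary terms ever appear, and no regularity of $v$ up to $\p D$ is required.
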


The proof of Proposition~\ref{prop:Herglotz} is very similar to \cite[Proposition~3.4]{SS21NonscatteringFreeBoundary} that considered approximation in $W^{1,p}(D)$. Here we need to work with fractional Sobolev spaces instead.


\begin{proof}
In view of the  Hahn-Banach
theorem, it is enough to prove that any bounded linear functional
$\ell: H^{s,p}(D) \to \mC$ that vanishes on $\begin{Bmatrix}\begin{array}{l|l}
P_{k}f|_{D} & f\in C^{\infty}(\mathcal{S}^{n-1})\end{array}\end{Bmatrix}$ must also vanish on $\begin{Bmatrix}\begin{array}{l|l}
v\in H^{s,p}(D) & -(\Delta+k^{2})v=0\text{ in }D\end{array}\end{Bmatrix}$. Let $\ell$ be such a linear functional, and define a bounded linear functional $\ell_{1}:H^{s,p}(\mathbb{R}^{n})\rightarrow\mathbb{C}$
by $\ell_{1}(u):=\ell(u|_{D})$. By duality \eqref{eq:duality}, there
exists a unique $\mu\in H^{-s,p'}(\mathbb{R}^{n})$ such that 
\[
\ell_{1}(u)=(u,\mu)\quad\text{for all }u\in H^{s,p}(\mathbb{R}^{n}),
\]
where $(\cdot,\cdot)$ is the sesquilinear distributional pairing
in $\mathbb{R}^{n}$. It is easy to see that $\mu=0$ in $\mathbb{R}^{n}\setminus\overline{D}$,
and the condition $\ell(P_{k}f|_{D})=0$ for all $f\in C^{\infty}(\mathcal{S}^{n-1})$
implies that 
\begin{equation}
(P_{k}f,\mu)=0\quad\text{for all }f\in C^{\infty}(\mathcal{S}^{n-1}).\label{eq:ell1-cond1}
\end{equation}

We now define the distribution $w:=\Phi_{k}*\mu$, where 
\begin{equation}
\Phi_{k}(x)=\frac{ik^{\frac{n-2}{2}}}{4(2\pi)^{\frac{n-2}{2}}}|x|^{-\frac{n-2}{2}}H_{\frac{n-2}{2}}^{(1)}(k|x|)\label{eq:outgoing-fundamental-solution}
\end{equation}
is the outgoing fundamental solution of the Helmholtz operator $-(\Delta+k^{2})$ and $H_{\alpha}^{(1)}$ is the Hankel function (see \cite[\S 1.2.3]{Yaf10ScatteringAnalyticTheory}). Then $w$ is a distributional solution of 
\begin{equation}
-(\Delta+k^{2})w=\mu\quad\text{in }\mathbb{R}^{n}.\label{eq:def-w-aux}
\end{equation}
Elliptic regularity yields $w \in H^{2-s,p'}_{\mathrm{loc}}(\mR^n)$, and since ${\rm supp}(\mu)\subset\overline{D}$ we also have that $w$ is $C^{\infty}$ in $\mathbb{R}^{n}\setminus\overline{D}$.

Given any $f\in C^{\infty}(\mathcal{S}^{n-1})$, we write $u=P_{k}f\in C^{\infty}(\mathbb{R}^{n})$.
Using \eqref{eq:ell1-cond1} and the fact that $\mu$ has compact support,
we have  
\begin{equation}
0=(u,\mu)=\lim_{r\rightarrow\infty}(u,\mu)_{B_{r}},\label{eq:ell1-cond2}
\end{equation}
where $(\cdot,\cdot)_{B_{r}}$ is the sesquilinear distributional
pairing in the ball $B_{r}$. We now consider a cut-off function
$\chi\in C_{c}^{\infty}(\mathbb{R}^{n})$ satisfying $0\le\chi\le1$
and $\chi=1$ near $\overline{D}$. Using \eqref{eq:def-w-aux},
we can write \eqref{eq:ell1-cond2} as 
\begin{align}
0 & =\lim_{r\rightarrow\infty}\bigg[(\chi u,(\Delta+k^{2})w)_{B_{r}}+((1-\chi)u,(\Delta+k^{2})w)_{B_{r}}\bigg]\nonumber \\
 & =\lim_{r\rightarrow\infty}\bigg[((\Delta+k^{2})(\chi u),w)_{B_{r}}+((\Delta+k^{2})((1-\chi)u),w)_{B_{r}}\nonumber \\
 & \qquad\qquad+\int_{\partial B_{r}}(u\overline{\partial_{|x|}w}-(\partial_{|x|}u)\overline{w})\,dS\bigg]\nonumber \\
 & =\lim_{r\rightarrow\infty}\int_{\partial B_{r}}(u\overline{\partial_{|x|}w}-(\partial_{|x|}u)\overline{w})\,dS,\label{eq:ell1-cond3}
\end{align}
where $\partial_{|x|}=\hat{x}\cdot\nabla$ denotes the radial derivative. Here we also used the fact that $(\Delta+k^{2})u=0$ in $\mathbb{R}^{n}$. 

\begin{subequations}
Using \cite[Lemma~1.2 and equation~(1.18)]{Mel95GeometricScattering}, 
we know that the Herglotz function $u=P_{k}f$ has the following asymptotics as $|x| \rightarrow \infty$: 
\begin{align}
u(x) & =c_{n,k}'|x|^{-\frac{n-1}{2}}\bigg(e^{ik|x|}f(\hat{x})+i^{n-1}e^{-ik|x|}f(-\hat{x})\bigg)+O(|x|^{-\frac{n+1}{2}}),\label{eq:asymptotic1a}\\
\partial_{|x|}u(x) & =c_{n,k}'|x|^{-\frac{n-1}{2}}ik\bigg(e^{ik|x|}f(\hat{x})-i^{n-1}e^{-ik|x|}f(-\hat{x})\bigg)+O(|x|^{-\frac{n+1}{2}}),
\end{align}
where $c_{n,k}'=k^{\frac{n-1}{2}}e^{\frac{\pi(n-1)i}{4}}(2\pi)^{-\frac{n-1}{2}}$.
On the other hand, from \cite[equation~(2.27)]{Yaf10ScatteringAnalyticTheory},
we know that $w$ has the asymptotics 
\begin{align}
w(x) & =c_{n,k}''|x|^{-\frac{n-1}{2}}e^{ik|x|}\hat{\mu}(k\hat{x})+O(|x|^{-\frac{n+1}{2}})\quad\text{as }|x|\rightarrow\infty,\label{eq:asymptotic1c}\\
\partial_{|x|}w(x) & =c_{n,k}''|x|^{-\frac{n-1}{2}}ike^{ik|x|}\hat{\mu}(k\hat{x})+O(|x|^{-\frac{n+1}{2}})\quad\text{as }|x|\rightarrow\infty,\label{eq:asymptotic1d}
\end{align}
where $c_{n,k}''=2^{-1}e^{-\frac{\pi(n-3)i}{4}}(2\pi)^{-\frac{n-1}{2}}k^{\frac{n-3}{2}}$
and $\hat{\mu}\in C^{\infty}(\mathbb{R}^{n})$ is the Fourier transform
of the compactly supported distribution $\mu$.
\end{subequations}

\begin{subequations}
Combining \eqref{eq:ell1-cond3} with \eqref{eq:asymptotic1a}--\eqref{eq:asymptotic1d},
we obtain 
\[
\int_{\mathcal{S}^{n-1}}f(\hat{x})\overline{\hat{\mu}(k\hat{x})}\,d\hat{x}=0.
\]
By the fact that $f\in C^{\infty}(\mathcal{S}^{n-1})$ was arbitrary, we conclude
$\hat{\mu}(k\hat{x})=0$ for all $\hat{x}\in\mathcal{S}^{n-1}$. Consequently,
\eqref{eq:asymptotic1c} becomes 
\[
w(x)=O(|x|^{-\frac{n+1}{2}})\quad\text{as }|x|\rightarrow\infty.
\]
In other words, the far-field pattern of $w$ is vanishing. By the Rellich uniqueness theorem \cite{CK19scattering,Hormander_rellich}, the unique continuation principle and the connectedness of $\mR^n \setminus \ol{D}$, we conclude that 
\begin{equation}
w=0\quad\text{in }\mathbb{R}^{n}\setminus\overline{D}.\label{eq:support-w-aux}
\end{equation}
Since $w \in H^{2-s,p'}_{\mathrm{loc}}(\mR^n)$, we also conclude that $w\in H^{2-s,p'}_{\ol{D}}(\mathbb{R}^{n})$. 
\end{subequations}

Now let $v\in H^{s,p}(D)$ be any solution of $(\Delta+k^{2})v=0$
in $D$, and let $\tilde{v}\in H^{s,p}(\mathbb{R}^{n})$ be such that
$\tilde{v}|_{D}=v$. We see that 
\[
\ell(v)=\ell_{1}(\tilde{v}|_{D})=(\tilde{v},\mu)=(\tilde{v},(\Delta+k^{2})w).
\]
From \eqref{eq_density_sobolev}, we know that there are $w_{j}\in C_{c}^{\infty}(D)$
with $w_{j}\rightarrow w$ in $H^{2-s,p'}(\mathbb{R}^{n})$. Since
$(\Delta+k^{2})\tilde{v}=0$ in $D$, we finally conclude that 
\[
\ell(v)=\lim_{j\rightarrow\infty} (\tilde{v},(\Delta+k^{2})w_j) = \lim_{j\rightarrow\infty}((\Delta+k^{2})\tilde{v},w_{j})=0,
\]
which is our desired result. 
\end{proof}

\addtocontents{toc}{\SkipTocEntry}
\subsection{Proof of the main result}

Theorem~\ref{thm:main-incident} 
is an immediate consequence of the following result: 
\begin{thm}
\label{thm:main1-strong}Let $D$ be a bounded Lipschitz domain in
$\mathbb{R}^{n}$ {\rm (}$n \ge 2${\rm )} such that $\mathbb{R}^{n}\setminus\overline{D}$
is connected. Suppose that $k^2>0$ is not a Dirichlet eigenvalue of
$-\Delta$ in $D$. Given any constant $c_{0}\in\mathbb{R}$, there
exist Herglotz wave functions $u_{j}\in C^{\infty}(\mathbb{R}^{n})$ solving $(\Delta + k^2) u_j = 0$ in $\mathbb{R}^n$ such
that 
\[
\lim_{j\rightarrow\infty}\|u_{j}-c_{0}\|_{L^{\infty}(\partial D)}=0.
\]
\end{thm}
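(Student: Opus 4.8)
The plan is to reduce Theorem~\ref{thm:main1-strong} to the Runge--Herglotz approximation of Proposition~\ref{prop:Herglotz} by first producing a suitable interior solution and then controlling its boundary trace via a fractional Sobolev embedding into continuous functions. The key point is that the target function $c_0$, viewed as a function on $\partial D$, should itself be the boundary trace of a Helmholtz solution $v$ in $D$; then Proposition~\ref{prop:Herglotz} gives Herglotz waves $u_j$ approximating $v$ in $H^{s,p}(D)$, and if the parameters $(s,p)$ are chosen so that $H^{s,p}(D) \hookrightarrow C(\overline D)$ (equivalently $sp > n$ after using the extension operator \eqref{eq:Hsp-extension} and the Sobolev embedding \eqref{eq_fse}), the $H^{s,p}(D)$-convergence upgrades to uniform convergence of the traces on $\partial D$, yielding $\|u_j - c_0\|_{L^\infty(\partial D)} = \|u_j - v\|_{L^\infty(\partial D)} \le C\|u_j - v\|_{H^{s,p}(D)} \to 0$.

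First I would construct $v$. Since $k^2$ is not a Dirichlet eigenvalue of $-\Delta$ in $D$, the inhomogeneous Dirichlet problem $(\Delta + k^2) v = 0$ in $D$, $v|_{\partial D} = c_0$ on $\partial D$, is uniquely solvable. On a bounded Lipschitz domain this is exactly the setting of \cite{JK95Dirichlet}: the Dirichlet problem for $\Delta + k^2$ (a compact perturbation of the Laplacian, so Fredholm theory applies and the non-eigenvalue hypothesis gives invertibility) with boundary data in an appropriate Besov/fractional Sobolev trace space has a solution $v \in H^{s,p}(D)$ for a suitable range of $(s,p)$; since the constant $c_0$ is smooth on $\partial D$, its trace lies in every such space, and one gets $v \in H^{s,p}(D)$ for some $s \le 1$ and $p$ with $sp > n$. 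I would cite the relevant theorem from \cite{JK95Dirichlet} (or the standard Lipschitz-domain boundary value theory) for the precise solvability statement and regularity of $v$, noting also that $v$ is real-valued because $c_0$ is.

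Next I would fix the exponents. Proposition~\ref{prop:Herglotz} requires $0 < s \le 1$ and $1 < p < \infty$, and I need $sp > n$ for the embedding $H^{s,p}(\mathbb{R}^n) \hookrightarrow C(\mathbb{R}^n)$ (via \eqref{eq_fse} with $s - n/p = s_1 - n/p_1$, pushing down to an $L^\infty$-type space; more precisely $H^{s,p} \hookrightarrow H^{t,\infty\text{-surrogate}}$, or one simply invokes that $s - n/p > 0$ forces Hölder continuity). Taking $s$ close to $1$ and $p$ large makes $sp > n$ while keeping $s \le 1$, so the constraints are compatible; I must only check that the Dirichlet solvability of \cite{JK95Dirichlet} indeed covers some such pair, which it does for $p$ in a neighborhood of $2$ on a general Lipschitz domain and on a larger range if $\partial D$ is smoother — but since we are allowed $C^1$-type arguments and the statement is about Lipschitz domains, the optimal-range results of \cite{JK95Dirichlet} suffice. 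Then apply Proposition~\ref{prop:Herglotz} to this $v$ to get real Herglotz waves $u_j$ with $\|u_j - v\|_{H^{s,p}(D)} \to 0$, use the extension operator and Sobolev embedding to bound $\|u_j - v\|_{C(\overline D)} \lesssim \|u_j - v\|_{H^{s,p}(D)}$, and restrict to $\partial D$.

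I expect the main obstacle to be the first step: ensuring that the Dirichlet problem for $\Delta + k^2$ on a bounded Lipschitz domain is solvable with a solution in a fractional Sobolev space $H^{s,p}(D)$ with $sp > n$, and doing so with $s \le 1$ so that Proposition~\ref{prop:Herglotz} applies. The tension is that Lipschitz domains only afford a limited range of $(s,p)$ for which boundary value problems are well posed, while we simultaneously need $sp > n$; this is precisely why the paper restricts to Proposition~\ref{prop:Herglotz}'s regime $0 < s \le 1$ and must invoke the sharp Lipschitz-domain theory of \cite{JK95Dirichlet} rather than classical $H^2$ elliptic regularity. Everything after the interior solution is constructed — the approximation, the embedding, the trace restriction, and the reality of the $u_j$ — is routine.
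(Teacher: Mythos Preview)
Your overall strategy is exactly the paper's: solve the Dirichlet problem $(\Delta+k^2)v=0$ in $D$ with $v|_{\partial D}=c_0$, apply Proposition~\ref{prop:Herglotz} to approximate $v$ in $H^{s,p}(D)$ by Herglotz waves, and use the Sobolev embedding $H^{s,p}\hookrightarrow C(\overline D)$ (requiring $s>n/p$) to pass to uniform convergence on $\partial D$.

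The gap is precisely where you flag the main obstacle but then wave it away. You assert that a direct citation of \cite{JK95Dirichlet} yields $v\in H^{s,p}(D)$ for some $s\le 1$ with $sp>n$, and that ``the optimal-range results of \cite{JK95Dirichlet} suffice.'' On a general Lipschitz domain this is \emph{not} available in one shot when $n\ge 3$. The relevant Jerison--Kenig solvability range for the zero-Dirichlet problem (recorded in the paper as Proposition~\ref{prop:elliptic-regularity}) is $\tfrac{1}{p}<s<\tfrac{3}{p}$ for $p\ge 2$, which forces $s-\tfrac{n}{p}<\tfrac{3-n}{p}\le 0$. Thus no single application of \cite{JK95Dirichlet} lands you in a space with $s>n/p$; your remark that solvability holds ``for $p$ in a neighborhood of $2$'' only makes the situation worse, since then $sp$ is near $2$.

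What the paper actually does is a bootstrap. Starting from $v\in H^{1,2}(D)$, set $w=v-c_0$, so that $-\Delta w=k^2 v$ with $w|_{\partial D}=0$. One embeds $k^2 v\in H^{2/p_j,p_j}(D)$ into $H^{2/p_{j+1}-2,\,p_{j+1}}(D)$ via Sobolev, applies Proposition~\ref{prop:elliptic-regularity} to get $w\in H^{2/p_{j+1},p_{j+1}}(D)$, and iterates along the scale $\tfrac{1}{p_j}=\tfrac12-\tfrac{2j}{n-2}$; after finitely many steps one reaches an exponent pair with $s>n/p$ while staying inside the admissible window $\tfrac{1}{p}<s<\tfrac{3}{p}$. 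Once this regularity is in hand, the rest of your outline (approximation, embedding, trace restriction, reality of the $u_j$) goes through verbatim.
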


Before we prove Theorem~\ref{thm:main1-strong} we need the following
result, which is a special case of \cite[Theorems~1.1~\&~1.3]{JK95Dirichlet}.

\begin{prop}
\label{prop:elliptic-regularity} Let $D$ be a bounded Lipschitz domain
in $\mathbb{R}^{n}$ {\rm (}$n \ge 2${\rm )}. If $2 \le p < \infty$ and $f\in H^{s-2,p}(D)$
where 
\[
\frac{1}{p}<s<\frac{3}{p},
\]
then there exists a unique $u\in H^{s,p}(D)$ satisfying $-\Delta u=f$
in $D$ and $u=0$ on $\partial D$. 
\end{prop}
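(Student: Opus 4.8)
As already indicated, the statement is a special case of the Jerison--Kenig theory of the inhomogeneous Dirichlet problem in Lipschitz domains, so the plan is to \emph{reduce to} their results rather than to redo the underlying $L^{p}$ analysis on Lipschitz boundaries. I would proceed in three steps: (i) peel off the inhomogeneity by a Newtonian potential, reducing to a \emph{harmonic} Dirichlet problem with Besov boundary data; (ii) solve the harmonic problem by \cite[Theorems~1.1~and~1.3]{JK95Dirichlet}; (iii) recombine and extract uniqueness. Since \cite{JK95Dirichlet} in fact treats the inhomogeneous equation directly, one may also skip (i) and apply their theorem with zero boundary data; the three-step route is slightly longer but isolates exactly what is being used.

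For step (i): extend $f$ via \eqref{eq:Hsp-extension} to $\tilde f\in H^{s-2,p}(\mathbb{R}^{n})$ and, after multiplying by a cut-off equal to $1$ on a neighbourhood of $\overline{D}$, assume $\tilde f$ has compact support while still $\tilde f|_{D}=f$. Put $w:=N*\tilde f$ with $N$ the fundamental solution of $-\Delta$ (the logarithmic kernel when $n=2$, where only the behaviour of $w$ near $\overline{D}$ matters), so that $-\Delta w=\tilde f$ in $\mathbb{R}^{n}$; the standard mapping properties of the Newtonian potential — the low-frequency singularity of the multiplier $|\xi|^{-2}$ being harmless because $\tilde f$ is compactly supported — give $w\in H^{s,p}_{\mathrm{loc}}(\mathbb{R}^{n})$. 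Since $s>1/p$, the trace $g_{0}:=w|_{\partial D}$ is well defined in the Besov space $B^{s-1/p,p}(\partial D)$ by the trace theorem on Lipschitz domains. It then suffices to find $v\in H^{s,p}(D)$ with $\Delta v=0$ in $D$ and $v|_{\partial D}=-g_{0}$: then $u:=(v+w)|_{D}\in H^{s,p}(D)$ satisfies $-\Delta u=f$ in $D$ and $u|_{\partial D}=0$.

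Step (ii) is the well-posedness of the harmonic Dirichlet problem $\Delta v=0$ in $D$, $v|_{\partial D}=g_{0}\in B^{s-1/p,p}(\partial D)$, with solution $v\in H^{s,p}(D)$, from \cite[Theorem~1.1]{JK95Dirichlet}, the admissibility of the exponents being \cite[Theorem~1.3]{JK95Dirichlet}. The \textbf{crux}, and essentially the only thing to verify on our side, is that the hypotheses $2\le p<\infty$ and $1/p<s<3/p$ place $(s,p)$ inside the Jerison--Kenig solvability region: the lower bound $s>1/p$ is exactly what makes the trace and the boundary condition meaningful, while the upper bound $s<3/p$ is the restriction coming from the Lipschitz geometry within the Jerison--Kenig range — at $p=2$ it is the classical barrier $s<3/2$ (harmonic functions on a Lipschitz domain are in general no better than $H^{3/2}$), and for $p>2$ it is the corresponding $L^{p}$ statement. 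Granting this, step (iii) is immediate: if $u_{1},u_{2}$ both solve the problem then $u_{1}-u_{2}$ is harmonic in $D$ with zero trace in $B^{s-1/p,p}(\partial D)$, hence vanishes — this uniqueness being part of the well-posedness asserted in \cite[Theorem~1.1]{JK95Dirichlet}.

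The genuinely hard analysis — the $L^{p}$ bounds for the harmonic layer potentials and the regularity problem on Lipschitz graphs, via Rellich-type identities — lives entirely in \cite{JK95Dirichlet} and is not reproduced here; on the present side the remaining work is only the bookkeeping of fractional Sobolev and Besov indices along a Lipschitz boundary, the compactly-supported Newtonian-potential estimate, and the confirmation that $(s,p)$ with $p\ge2$ and $1/p<s<3/p$ indeed lies in the Jerison--Kenig region.
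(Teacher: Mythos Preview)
Your approach is correct and ultimately the same as the paper's: both reduce to \cite{JK95Dirichlet}. The differences are in economy and precision. The paper skips your Newtonian-potential detour entirely (as you yourself note is possible) and applies the \emph{inhomogeneous} Jerison--Kenig theorem with zero boundary data directly. More importantly, the paper does not leave the ``crux'' parameter check at the heuristic level; it carries out the explicit case analysis: for $n\ge 3$ one introduces the exponent $p_0$ from \cite[Theorem~1.1]{JK95Dirichlet}, applies part~(c) when $p\ge p_0'$, and part~(a) when $2\le p<p_0'$ (using $s<3/p\le 1+1/p$); for $n=2$ one uses \cite[Theorem~1.3]{JK95Dirichlet} together with $3/p\le 2/p+1/2$. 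Your invocation of \cite[Theorem~1.3]{JK95Dirichlet} as ``the admissibility of the exponents'' is slightly off --- that theorem is the two-dimensional analogue of Theorem~1.1, not a general admissibility statement --- and the Jerison--Kenig region is not literally $\{1/p<s<3/p\}$, so the case split above is the actual content of the proof.
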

\begin{proof}
We first consider the case when $n \ge 3$. Let $p_0$ be as in \cite[Theorem~1.1]{JK95Dirichlet} (with $\Omega = D$). If $p_0' \le p < \infty$, the result follows from \cite[Theorem~1.1(c)]{JK95Dirichlet}. On the other hand, if $2 \le p < p_0'$, the result follows from \cite[Theorem~1.1(a)]{JK95Dirichlet} since $s < \frac{3}{p} \le 1 + \frac{1}{p}$. The case when $n=2$ can be proved using identical reasoning using \cite[Theorem~1.3]{JK95Dirichlet} and the observation $\frac{3}{p} \le \frac{2}{p}+\frac{1}{2}$. 
\end{proof}


\begin{proof}
[Proof of Theorem~{\rm \ref{thm:main1-strong}}] Since $k^{2}$ is
not a Dirichlet eigenvalue in $D$, there exists a unique solution
$v\in H^{1,2}(D)$ such that 
\[
(\Delta+k^{2})v=0\text{ in }D\quad\text{and}\quad v=c_{0}\text{ on }\partial D.
\]
If $v\in H^{s,p}(D)$ for some $0<s\le1$ and $p>n/s$, using Proposition~\ref{prop:Herglotz},
we know that there exist Herglotz waves $u_{j}\in C^{\infty}(\mathbb{R}^{n})$ such
that 
\[
\|u_{j}-c_{0}\|_{L^{\infty}(\partial D)}=\|u_{j}-v\|_{L^{\infty}(\partial D)}\le\|u_{j}-v\|_{C(\overline{D})}\le C\|u_{j}-v\|_{H^{s,p}(D)}\rightarrow0,
\]
where we used the Sobolev embedding. 

It remains to show that $v\in H^{s,p}(D)$ for some $s, p$ with $s > n/p$, and this follows from a standard bootstrap argument based on Proposition \ref{prop:elliptic-regularity}. We claim that 
\begin{equation} \label{v_space}
v \in H^{\frac{2}{p_j},p_j}(D) \text{ for } 0 \leq j < \frac{n-2}{4},
\end{equation}
where 
\[
\frac{1}{p_j} = \frac{1}{2} - j \frac{2}{n-2}.
\]
The case $j=0$ follows since $v \in H^{1,2}(D)$. We argue by induction and assume that this holds for $j$. Define $w:=v-c_{0}$ and note that $w$ solves 
\[
-\Delta w=k^{2}v \in H^{\frac{2}{p_j}, p_j}(D), \qquad w|_{\partial D} = 0.
\]
We next use the Sobolev embedding $H^{\frac{2}{p_j}, p_j}(D) \subset H^{\frac{2}{q}-2, q}(D)$ where $\frac{2}{p_j} > \frac{2}{q}-2$ and 
\[
\frac{2}{p_j} - \frac{n}{p_j} = \frac{2}{q}-2 - \frac{n}{q}.
\]
It follows that $q = p_{j+1}$ and then indeed $\frac{2}{p_j} > \frac{2}{q}-2$. In particular $-\Delta w \in H^{\frac{2}{p_{j+1}}-2, p_{j+1}}(D)$ with $w|_{\partial D} = 0$, and we may use Proposition \ref{prop:elliptic-regularity} to conclude that $w \in H^{\frac{2}{p_{j+1}}, p_{j+1}}(D)$. This completes the induction step and proves \eqref{v_space}.

We have proved that $v \in H^{\frac{2}{p_j},p_j}(D)$ where $j$ is the largest integer $< \frac{n-2}{4}$. Using the above notation, we have $\Delta w \in H^{\frac{2}{p_j},p_j}(D)$ and $w|_{\partial D} = 0$. By Sobolev embedding we have $\Delta w \in H^{s-2,p}(D)$ whenever $p \geq p_j$ and \[
\frac{2}{p_j}-\frac{n}{p_j} = s - 2 - \frac{n}{p}.
\]
The last condition implies that 
\[
s-\frac{n}{p} = 2 + \frac{2-n}{p_j} = 2 + \frac{2-n}{2} + 2j \geq 0
\]
since $j \geq \frac{n-2}{4}-1$.
If $j > \frac{n-2}{4}-1$, using Proposition \ref{prop:elliptic-regularity} once again we obtain that $w$ and hence $v$ is in $H^{s,p}$ for some $s > n/p$. On the other hand, if $j = \frac{n-2}{4}-1$ we iterate the argument once more to get $v \in H^{s,p}$ for some $s > n/p$. This concludes the proof.
\end{proof}

The next simple example shows that the condition that $k^2$ is not an eigenvalue is necessary at least for balls.

\begin{example} \label{ex_dirichlet_eigenvalue}
Let $v(x) := |x|^{\frac{2-n}{2}} J_{\frac{n-2}{2}}(|x|)$. We see that
$v\in C^{\infty}(\mathbb{R}^{n})$ and $(\Delta+1)v=0$ in $\mathbb{R}^{n}$.
Suppose that $u_{1}$ is a real-valued function satisfying $(\Delta+1)u_{1}=0$
in $\mathbb{R}^{n}$. Since 
\[
v(x)=0\quad\text{when }|x|=j_{\frac{n-2}{2},m} \text{ for any $m \geq 1$},
\]
where $j_{\frac{n-2}{2},m}$ denotes the $m^{\rm th}$ positive zero of $J_{\frac{n-2}{2}}$, we have 
\[
\int_{|x|=j_{\frac{n-2}{2},m}}u_{1}\frac{\partial v}{\partial r}\,dS=\int_{|x|<j_{\frac{n-2}{2},m}}(u_{1}\Delta v-v\Delta u_{1})\,dx=0.
\]
Since  
\[
(-1)^{m}\frac{\partial v}{\partial r}(x)>0\quad\text{when }|x|=j_{\frac{n-2}{2},m},
\]
it follows that $u_{1}$ must change sign on $|x|=j_{\frac{n-2}{2},m}$.

Similarly, if $R > 0$ and if $u_0$ solves $(\Delta+k_{m}^{2})u_{0}=0$ in $\mathbb{R}^{n}$ where $k_m = R^{-1}j_{\frac{n-2}{2},m}$, define $u_1$ via the rescaling  
\[
u_{0}(x)=u_{1}(R^{-1}j_{\frac{n-2}{2},m}x)\quad\text{for }x\in\mathbb{R}^{n}.
\]
We see that $(\Delta+1)u_1 = 0$ in $\mR^n$. The above discussion
shows that $u_{0}$ must change sign on $\partial B_{R}$. 
\end{example}

The following strong maximum principle can be found in \cite[Appendix~A]{KLSS22QuadratureDomain}. However, for readers' convenience, here we exhibit the statement as well as its proof. 

\begin{lem}[Strong maximum principle] \label{lem:strong-maximum-principle}
Let $D$ be a bounded Lipschitz domain in $\mathbb{R}^{n}$ {\rm (}$n \ge 2${\rm )}, and let $k^{2} < \lambda_{1}(D)$, where $\lambda_{1}(D)>0$ denotes the smallest $H_{0}^{1}(D)$-eigenvalue of $-\Delta$. If the solution $u \in H^{1}(D)$ satisfies 
\[
(\Delta + k^{2})u = 0\text{ in $D$},\quad u\ge 0 \text{ on $\partial D$,}
\]
then for each open component $G$ of $D$ we have either $u \equiv 0$ in $G$ or $u > 0$ in $G$ {\rm (}note that $u \in C^{\infty}(G)$ by elliptic regularity{\rm )}.  
\end{lem}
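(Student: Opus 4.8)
The plan is to reduce the statement to the classical strong maximum principle for the operator $-\Delta$ on a smooth subdomain, using the eigenvalue hypothesis to rule out sign changes. First I would fix an open component $G$ of $D$; since everything is local to $G$ and $\lambda_1(G) \geq \lambda_1(D) > k^2$ by domain monotonicity of the first eigenvalue, I may as well assume $D = G$ is connected. By elliptic regularity $u \in C^\infty(D)$, so it suffices to show that $u \geq 0$ throughout $D$ and then apply the textbook strong maximum principle (e.g. for the equation $\Delta u = -k^2 u$, written as $Lu := \Delta u + k^2 u = 0$ with the zeroth-order term handled in the standard way, which gives that a nonnegative solution attaining an interior zero must vanish identically on any ball, hence on $D$ by connectedness).

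The heart of the matter is therefore the nonnegativity $u \geq 0$ in $D$, and this is where I expect the main obstacle, since $\partial D$ is only Lipschitz and the zeroth-order coefficient $k^2$ is positive (so $-\Delta - k^2$ is not of positive type in general — positivity is exactly what the hypothesis $k^2 < \lambda_1(D)$ buys). The key step is the variational argument: write $u = u_+ - u_-$ with $u_\pm \in H^1(D)$, and observe that the boundary condition $u \geq 0$ on $\partial D$ means $u_- \in H_0^1(D)$. Using $u_-$ as a test function in the weak formulation of $(\Delta + k^2)u = 0$ gives
\[
\int_D \nabla u \cdot \nabla u_- \, dx = k^2 \int_D u\, u_- \, dx,
\]
and since $\nabla u \cdot \nabla u_- = -|\nabla u_-|^2$ and $u\, u_- = -u_-^2$ a.e., this becomes
\[
\int_D |\nabla u_-|^2 \, dx = k^2 \int_D u_-^2 \, dx.
\]
If $u_- \not\equiv 0$, then $u_- \in H_0^1(D) \setminus \{0\}$ and the Rayleigh quotient characterization of $\lambda_1(D)$ forces $\int_D |\nabla u_-|^2 \geq \lambda_1(D) \int_D u_-^2 > k^2 \int_D u_-^2$, a contradiction. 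Hence $u_- \equiv 0$, i.e. $u \geq 0$ in $D$.

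A few routine points remain to be dressed up: one should justify that $u \geq 0$ on $\partial D$ in the trace sense indeed gives $u_- \in H_0^1(D)$ (standard for Lipschitz domains, where $H_0^1$ is characterized by vanishing trace and the negative-part map is continuous on $H^1$), and one should make sure the weak formulation is legitimate for $u \in H^1(D)$ with the stated equation. Then the dichotomy is exactly the conclusion: either the interior zero set of the nonnegative smooth solution $u$ is nonempty, in which case the classical strong maximum principle on connected $D$ yields $u \equiv 0$ in $D$; or $u$ has no interior zero, in which case $u > 0$ in $D$. I do not anticipate any genuine difficulty beyond the variational step; the Lipschitz regularity of $\partial D$ enters only through the trace theory needed for $u_- \in H_0^1(D)$, which is classical.
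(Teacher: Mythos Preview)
Your proposal is correct and follows essentially the same route as the paper: reduce to a connected component, test the equation against $u_-\in H_0^1$ and use the Rayleigh quotient characterization of $\lambda_1$ to force $u_-\equiv 0$, then upgrade $u\ge 0$ to the dichotomy $u\equiv 0$ or $u>0$. The only cosmetic difference is in this last step: the paper uses the mean value property for the Helmholtz equation to show the interior zero set is open, whereas you invoke the classical strong maximum principle (equivalently, once $u\ge 0$ one has $\Delta u=-k^2u\le 0$, so $u$ is superharmonic and the strong minimum principle applies). Both are standard and yield the same conclusion.
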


\begin{proof}
It is easy to see that for each component $G$ of $D$ we have $k^{2} < \lambda_{1}(G)$ and 
\[
(\Delta + k^{2})u = 0\text{ in $G$},\quad u\ge 0 \text{ on $\partial G$.}
\]
Testing the equation above by $u_{-} \in H_{0}^{1}(G)$ and using Poincar\'{e} inequality, we have 
\[
\int_{G} |u_{-}|^{2} \,dx \le \frac{1}{\lambda_{1}(G)} \int_{G} |\nabla u_{-}|^{2} \,dx = \frac{k^{2}}{\lambda_{1}(G)} \int_{G} |u_{-}|^{2} \,dx.
\]
Since $\frac{k^{2}}{\lambda_{1}(G)}<1$, then $u_{-} \equiv 0$ in $G$, that is, 
\begin{equation}
u \ge 0 \text{ in $G$.} \label{eq:weak-maximum-principle}
\end{equation}
Let $x_{0} \in G$ such that $u(x_{0})=0$. The mean value theorem for Helmholtz equation (see e.g.\ \cite[Appendix~A]{KLSS22QuadratureDomain}) gives that 
\begin{equation}
\int_{B_{\epsilon}(x_{0})} u(x) \,dx = 0 \label{eq:MVT-zero-consequence}
\end{equation}
for all sufficiently small $\epsilon>0$ so that $\overline{B_{\epsilon}(x_{0})}\subset G$. Since $u$ is continuous in $G$, combining \eqref{eq:weak-maximum-principle} and \eqref{eq:MVT-zero-consequence} we know that $u=0$ in $B_{\epsilon}(x_{0})$, and this shows that $\begin{Bmatrix}\begin{array}{l|l} x \in G & u(x)=0 \end{array}\end{Bmatrix}$ is both open and closed in $G$. Since $G$ is connected, then we have either 
\[
\begin{Bmatrix}\begin{array}{l|l} x \in G & u(x)=0 \end{array}\end{Bmatrix} = G \quad \text{or} \quad \begin{Bmatrix}\begin{array}{l|l} x \in G & u(x)=0 \end{array}\end{Bmatrix} = \emptyset,
\]
which concludes our lemma. 
\end{proof}

Finally, we give the proof of Theorem \ref{thm:main-incident-two}.

\begin{proof}
[Proof of Theorem~{\rm \ref{thm:main-incident-two}}] 
Since $\abs{D} \leq \abs{B_r}$ where $r = j_{\frac{n-2}{2},1} k^{-1}$, the Faber-Krahn inequality (see
e.g. \cite[Theorem~III.3.1]{Cha01IsoperimetricInequality}) implies that each connected component $G$ of $D$ satisfies 
\[
\lambda_1(G) \geq \lambda_1(B_r) = k^2.
\]

\medskip

\noindent \textbf{Case 1.} If $\lambda_1(G) = k^2$, we choose $v$ to be an eigenfunction corresponding to the first eigenvalue with $v > 0$ in $G$, i.e.\ $v$ solves $(\Delta+k^2) v = 0$ in $G$ with $v \in H^1_0(G)$, see e.g.\ \cite[Theorem~2{\rm (ii)} in Section~6.5.1]{Eva10PDE}. 

\medskip

\noindent \textbf{Case 2.} If $\lambda_{1}(G) > k^{2}$, then there exists a unique solution $v \in H^{1}(G)$ such that 
\[
(\Delta + k^{2})v=0 \text{ in $G$},\quad v=1 \text{ on $\partial G$.}
\]
Using the strong maximum principle in Lemma~{\rm \ref{lem:strong-maximum-principle}}, we know that $v>0$ in $G$. 

\medskip

Next we choose a bounded Lipschitz domain $D_1$ that satisfies $E \subset D_1$, $\ol{D}_1 \subset D$, and $\mR^n \setminus \ol{D}_1$ is connected. The function $v|_{D_1}$ is in $H^{1,p}(D_1)$ for any $p > n$ and satisfies $v|_{\ol{D}_1} > 0$. The approximation result in Proposition \ref{prop:Herglotz} yields a sequence of Herglotz waves $u_j$ satisfying 
\[
\norm{u_j|_{D_1} - v}_{H^{1,p}(D_1)} \to 0 \text{ as $j \to \infty$.}
\]
If $j$ is sufficiently large, the Sobolev embedding ensures that $u_j|_{E} > 0$.
\end{proof}

\section*{Acknowledgments}

\noindent 
This project was finalized while the authors stayed at Institute Mittag Leffler (Sweden), during the program Geometric aspects of nonlinear PDE.
Kow and Salo were partly supported by the Academy of Finland (Centre of Excellence in Inverse Modelling and Imaging, 312121) and by the European Research Council under Horizon 2020 (ERC CoG 770924). Shahgholian was supported by Swedish Research Council. 

\section*{Declarations}

\noindent {\bf  Data availability statement:} All data needed are contained in the manuscript.

\medskip
\noindent {\bf  Funding and/or Conflicts of interests/Competing interests:} The authors declare that there are no financial, competing or conflict of interests.

\end{sloppypar}

\bibliographystyle{custom}
\bibliography{ref}
\end{document}